\newtheorem{theorem}{Theorem}[section]
\newtheorem{lemma}[theorem]{Lemma}
\newtheorem{corollary}[theorem]{Corollary}
\numberwithin{equation}{section}
\newcommand{\pair}[2]{\left(#1,#2\right)}
\newcommand{\wt}{\operatorname{wt}}
\newcommand{\Wt}{\operatorname{Wt}}
\newcommand{\bb}{{\mathbf b}{\mathbf b}}
\newcommand{\bbc}{{\mathbf b}{\mathbf b}^{c}}
\newcommand{\trace}{\operatorname{tr}}
\newcommand{\erf}{\operatorname{erf}}
\newcommand{\onethingatopanother}[2]{\genfrac{}{}{0pt}{}{#1}{#2}}
\begin{document}

\title{Cyclically consecutive permutation avoidance}
\author{Richard Ehrenborg}
\date{}

\maketitle

\begin{abstract}
We give an explicit
formula for the number of
permutations avoiding cyclically a consecutive pattern
in terms of the spectrum of
the associated operator of the consecutive pattern.
As an example,
the number of 
cyclically consecutive $123$-avoiding 
permutations in ${\mathfrak S}_{n}$
is given by
$n!$ times 
the convergent series
$\sum_{k=-\infty}^{\infty} \left(\frac{\sqrt{3}}{2\pi(k+1/3)}\right)^{n}$
for $n \geq 2$.
\end{abstract}

{\small
\noindent
Key words:
Cyclic consecutive pattern avoidance,
Integral operators,
Spectrum,
Trace class operators \\
AMS subject classifications:
primary 05A05,
secondary 05A15, 45C05
}

\section{Introduction}

The Euler number $E_{n}$
enumerates the number of alternating permutations
in the symmetric group~${\mathfrak S}_{n}$.
The Euler number has the following classical expression
\begin{equation}
    E_{n}
=
    n! 
\cdot
2
\cdot
\left(\frac{2}{\pi}\right)^{n+1}
\cdot
\sum_{k=-\infty}^{\infty}
\frac{1}{(4k+1)^{n+1}}   .
\label{equation_Euler}
\end{equation}
Simons and Yao~\cite{Simons_Yao} were the first
to the author's knowledge to give a probabilistic--geometric
argument for this identity.
See also~\cite[Section~4]{Ehrenborg_Levin_Readdy}
and~\cite{Elkies}.
The underlying idea is that
to obtain a random permutation~$\pi$ 
where each permutation is equally likely,
pick a random point
$x = (x_{1}, x_{2}, \ldots, x_{n})$
from the cube~$[0,1]^{n}$.
Next let the permutation $\pi$ be 
the standardization of the vector $x$,
that is, 
$\pi_{i} < \pi_{j}$ is equivalent to $x_{i} < x_{j}$
for all indexes $i \neq j$.
Geometrically, this corresponds to pick the permutation
associated with the simplex containing the point $x$
in the standard triangulation of the cube $[0,1]^{n}$.
Hence the probability that 
the point is alternating, that is,
$\cdots < x_{n-2} > x_{n-1} < x_{n}$
is given by $E_{n}/n!$.
By considering the distribution of the last coordinate $f_{n}$
of the point $x$
one obtains 
the recursion $f_{n} = T(f_{n-1})$
where $T$ is the operator 
given by $T(f) = \int_{0}^{x}  f(1-t) \: dt$.
This recursion is straightforward to solve using Fourier series.
Integrating this series yields
equation~\eqref{equation_Euler}.

Ehrenborg, Kitaev and Perry~\cite{Ehrenborg_Kitaev_Perry}
continued this work by studying consecutive pattern avoidance
using operators on the space $L^{2}([0,1]^{m})$.
The largest eigenvalue of the associated operator
yields the asymptotic behavior for the number of permutations.
In fact, more knowledge of the eigenvalues gives a better
asymptotic expansion;
see~\cite[Theorem~1.1]{Ehrenborg_Kitaev_Perry}.
However, a complete description of the eigenvalues
is hard to find. It is is only known in three cases.
First, consecutive $123,321$-avoiding permutations,
this is, the Euler numbers described above (modulo
a factor of $2$).
Second, consecutive $123$-avoiding permutations;
see Section~5 of~\cite{Ehrenborg_Kitaev_Perry}.
Finally, in~\cite[Section~6]{Ehrenborg_Jung} a weighted enumeration
problem is introduced where the associated
operator only has the eigenvalue $1$.

In Elkies' argument~\cite{Elkies}
for the series of the Euler number, he notes that the associated
operator is trace class and connects this with
cyclically alternating permutations.
We show that in the more general 
setting of consecutive pattern avoidance
if the operator has powers
that are trace class, there is an exact series
enumerating the cyclically avoiding permutations.
We explicitly state the result for
the number of cyclically consecutive $123$-avoiding
permutations; see
Theorem~\ref{theorem_123}.
Similarly for the weighted enumeration problem, we obtain
a closed form formula of $n!$;
see Theorem~\ref{theorem_weight}.

We end with open questions for further research.

\section{Weighted enumeration}

For a vector $x = (x_{1}, \ldots, x_{k})$
of $k$ distinct real numbers,
define $\Pi(x)$ to be {\em standardization} of the vector $x$,
that is, the unique permutation
$\sigma = (\sigma_{1}, \ldots, \sigma_{k})$
in ${\mathfrak S}_{k}$ such that
for all indices $1 \leq i < j \leq k$
the inequality
$x_{i} < x_{j}$ is equivalent to
$\sigma_{i} < \sigma_{j}$.
Let $S$ be a set of permutations in
the symmetric group ${\mathfrak S}_{m+1}$.
We say that a permutation $\pi$ in 
${\mathfrak S}_{n}$ {\em avoids the set $S$ consecutively}
if there is no index $1 \leq j \leq n-m$ such that
$\Pi(\pi_{j}, \pi_{j+1}, \ldots, \pi_{j+m}) \in S$.
Similarly,
a permutation $\pi \in {\mathfrak S}_{n}$
{\em avoids the set $S$ cyclically consecutively}
if there is no index $1 \leq j \leq n$ such that
$\Pi(\pi_{j}, \pi_{j+1}, \ldots, \pi_{j+m}) \in S$,
where the indexes are modulo $n$.

We consider the more general situation of
weighted enumeration of 
consecutive patterns; see~\cite{Ehrenborg_Jung}.
Let $\wt$ be
a real-valued weight function on the symmetric group
${\mathfrak S}_{m+1}$.
Similarly, let $\wt_{1}, \wt_{2}$ be
two real-valued weight functions on the symmetric group
${\mathfrak S}_{m}$.
We call $\wt_{1}$ and $\wt_{2}$
the initial and final weight function, respectively.
We extend these three weight functions to
the symmetric group~${\mathfrak S}_{n}$ for $n \geq m$ by
defining
\begin{align*}
     \Wt(\pi)
   = &
       \wt_{1}(\Pi(\pi_{1}, \pi_{2}, \ldots, \pi_{m}))  \\
      &
             \cdot
           \prod_{i=1}^{n-m}
       \wt(\Pi(\pi_{i}, \pi_{i+1}, \ldots, \pi_{i+m})) \\
       &
             \cdot
       \wt_{2}(\Pi(\pi_{n-m+1}, \pi_{n-m+2}, \ldots, \pi_{n})) .
\end{align*}
Let $\alpha_{n}$ be the sum of all the weights of
permutations in ${\mathfrak S}_{n}$, that is,
$$ \alpha_{n}
       =
   \sum_{\pi \in {\mathfrak S}_{n}} \Wt(\pi)  . $$
This framework can be used to study
consecutive pattern avoidance by defining the weight
$$  \wt(\sigma)
=
\begin{cases}
1 & \text{if } \sigma \not\in S , \\
0 & \text{if } \sigma \in S .
\end{cases} $$
Furthermore, let both the initial and final weight functions
be the constant function ${\mathbf 1}$.
Then for $n \geq m$, $\alpha_{n}$ is the number of permutations
in ${\mathfrak S}_{n}$ that avoid the set $S$.

Define the function $\chi$ on the $(m+1)$-dimensional cube
$[0,1]^{m+1}$ by
$\chi(x_{1}, x_{2}, \ldots, x_{m+1})
     =
       \wt(\Pi(x_{1},x_{2}, \ldots, x_{m+1}))$.
The associated linear operator $T$
on the space $L^{2}([0,1]^{m})$ is defined by
\begin{equation}
      T(f)(x_{1}, \ldots, x_{m})
     =
        \int_{0}^{1}
            \chi(t,x_{1}, \ldots, x_{m}) \cdot f(t,x_{1}, \ldots, x_{m-1}) \: dt .
\label{equation_T}
\end{equation}
Similar to $\chi$ define
the two functions $\kappa$ and $\mu$ on
the $m$-dimensional unit cube $[0,1]^{m}$ by
$\kappa(x)  =  \wt_{1}(\Pi(x))$
and
$\mu(x)  =  \wt_{2}(\Pi(x))$.
Then the quantity $\alpha_{n}$ is given by the
inner product
$$ \pair{T^{n-m}(\kappa)}{\mu}
      =
      \alpha_{n}/n!    .    $$
Hence to study the asymptotic expansion
of $\alpha_{n}$ one has to obtain the spectrum
of the operator $T$.

We now turn our attention to the power $T^{n}$
where $n \geq m$. Begin by expanding $\chi$
by the product 
$$  \chi_{n}(x_{1}, x_{2}, \ldots, x_{n})
        =
       \prod_{i=1}^{n-m}
            \chi(x_{i}, x_{i+1}, \ldots, x_{i+m}) . $$
Define the kernal $K_{n}(x,y)$ for $x,y \in [0,1]^{m}$
for $x = (x_{1}, \ldots, x_{m})$ and $y = (y_{1}, \ldots, y_{m})$
by the integral
$$  K_{n}(x,y) = \int_{[0,1]^{n-m}}
        \chi_{n+m}(x_{1}, \ldots, x_{m}, t_{1}, \ldots, t_{n-m},
                    y_{1}, \ldots, y_{m}) \: dt_{1} \cdots dt_{n-m}  .  $$
Since
\begin{equation}
   T^{n}f(y)  =  \int_{[0,1]^{m}}  K_{n}(x,y) dx  .  
\end{equation}
we conclude that $T^{n}$ is a Hilbert--Schmidt operator,
and hence $T^{n}$ is compact.

\section{Weighted cyclic enumeration}

Define the {\em cyclic weight} of a permutation $\pi$
in ${\mathfrak S}_{n}$, where $n \geq m+1$, by the product
$$  \Wt^{c}(\pi) 
   =
         \prod_{i=1}^{n}
       \wt(\Pi(\pi_{i}, \pi_{i+1}, \ldots, \pi_{i+m})) , $$
where the indexes are modulo $n$.
Similar to $\alpha_{n}$,
we consider the sum of cyclic weights to be
$$ \beta_{n}
       =
   \sum_{\pi \in {\mathfrak S}_{n}} \Wt^{c}(\pi)  . $$
We will write $\beta_{n}(S)$
when the weighting function is associated with avoiding
a certain pattern $S$.

\begin{lemma}
The sum of the cyclic weights $\beta_{n}$ is given by
the integral
$$   \beta_{n}
     =
     n!
 \cdot
     \int_{[0,1]^{n}} \chi_{n+m}(x_{1}, x_{2}, \ldots, x_{n},
          x_{1}, \ldots, x_{m}) \: dx_{1} \cdots dx_{n}. $$
\end{lemma}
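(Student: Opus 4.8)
The plan is to reuse the standard triangulation of the cube from the introduction, now carried out in dimension $n$. First I would note that the union of the diagonal hyperplanes $\{x_{a}=x_{b}: a\neq b\}$ has Lebesgue measure zero, so outside this set the standardization $\Pi(x_{1},\ldots,x_{n})$ is a well-defined permutation. This partitions $[0,1]^{n}$, up to a null set, into the $n!$ open simplices
$$ \Delta_{\pi} = \left\{ x \in [0,1]^{n} : \Pi(x_{1},\ldots,x_{n}) = \pi \right\}, \qquad \pi \in {\mathfrak S}_{n} . $$
Since the coordinate-permutation maps $x \mapsto (x_{\sigma(1)},\ldots,x_{\sigma(n)})$ are measure preserving and carry these cells bijectively into one another, each $\Delta_{\pi}$ has volume $1/n!$.

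Next I would unwind the integrand. Taking $N = n+m$ in the definition of $\chi_{N}$ leaves $N-m = n$ factors, so that
$$ \chi_{n+m}(x_{1},\ldots,x_{n},x_{1},\ldots,x_{m}) = \prod_{i=1}^{n} \chi(x_{i}, x_{i+1}, \ldots, x_{i+m}) , $$
where, under the substitution $z = (x_{1},\ldots,x_{n},x_{1},\ldots,x_{m})$, the appended block $x_{1},\ldots,x_{m}$ makes the trailing coordinates read $z_{k} = x_{k}$ with indices taken modulo $n$. By the definition of $\chi$ the right-hand side equals $\prod_{i=1}^{n} \wt(\Pi(x_{i}, x_{i+1}, \ldots, x_{i+m}))$, again with cyclic indices.

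The heart of the argument is then to show that this integrand is constant on each simplex and equal to $\Wt^{c}(\pi)$. On $\Delta_{\pi}$ we have $x_{a} < x_{b}$ if and only if $\pi_{a} < \pi_{b}$ for all $a,b$, so the relative order of the entries in any cyclic window $(x_{i},\ldots,x_{i+m})$ coincides with that of $(\pi_{i},\ldots,\pi_{i+m})$. Hence $\Pi(x_{i},\ldots,x_{i+m}) = \Pi(\pi_{i},\ldots,\pi_{i+m})$ for every $i$, and the product collapses to the constant $\Wt^{c}(\pi)$. Integrating over $\Delta_{\pi}$ therefore contributes $\Wt^{c}(\pi)/n!$, and summing over all $\pi \in {\mathfrak S}_{n}$ shows the integral equals $\beta_{n}/n!$, which rearranges to the stated formula.

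The main obstacle is purely bookkeeping: one must check carefully that the cyclic wrap-around built into $\Wt^{c}$ is reproduced exactly by appending $x_{1},\ldots,x_{m}$, and in particular that the windows with $i > n-m$ pick up the coordinates $x_{1},\ldots,x_{m}$ in the correct order. Beyond this indexing verification, the measure-zero argument for the triangulation and the constancy of the windowed standardizations on each cell are routine.
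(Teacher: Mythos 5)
Your proof is correct and follows the same route as the paper's: partition $[0,1]^{n}$ into the $n!$ simplices of the standard triangulation, observe that $\chi_{n+m}(x_{1},\ldots,x_{n},x_{1},\ldots,x_{m})$ is constant on each cell and equals $\Wt^{c}(\pi)$, and sum the contributions of volume $1/n!$ each. The extra bookkeeping you supply (the null set of diagonals, the wrap-around indexing) is a more detailed write-up of exactly the argument the paper gives.
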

\begin{proof}
We evaluate the integral by partitioning the $n$-dimensional
cube by the standard triangulation and noting that
the function $\chi_{n+m}$ is constant on each of the $n!$ simplexes,
that is,
\begin{align*}
&
     \int_{[0,1]^{n}}
         \chi_{n+m}(x_{1}, x_{2}, \ldots, x_{n}, x_{1}, \ldots, x_{m})
              \: dx_{1} \cdots dx_{n}  \\
& =
\sum_{\pi \in {\mathfrak S}_{n}}
     \int_{\onethingatopanother{x \in [0,1]^{n}}{\Pi(x) = \pi}}
       \chi_{n+m}(x_{1}, x_{2}, \ldots, x_{n}, x_{1}, \ldots, x_{m})
                    \: dx_{1} \cdots dx_{n} \\
& =
\sum_{\pi \in {\mathfrak S}_{n}}
     \int_{\onethingatopanother{x \in [0,1]^{n}}{\Pi(x) = \pi}}
           \Wt^{c}(\pi)
                    \: dx_{1} \cdots dx_{n} \\
& = 
\frac{1}{n!} \cdot
\sum_{\pi \in {\mathfrak S}_{n}}  \Wt^{c}(\pi) .
\qedhere
\end{align*}
\end{proof}

\begin{theorem}
Let $(\lambda_{k})_{1 \leq k \leq K}$,
where $K \leq \infty$, be a list of the eigenvalues 
of the operator~$T$
(including multiplicities),
where $T$ has the form given in equation~\eqref{equation_T}.
Let $n \geq m+1$.
Assume that the series $\sum_{k} \lambda_{k}^{n}$
converges absolutely.
Then the
sum of the cyclic weights is given by
$$ \beta_{n} 
        =
      n! \cdot \sum_{k=1}^{K} \lambda_{k}^{n}  . $$
\label{theorem_main}
\end{theorem}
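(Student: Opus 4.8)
The plan is to read the integral in the Lemma as the trace of the operator $T^{n}$ and then identify that trace with the sum of the $n$-th powers of the eigenvalues. Concretely, I would establish the chain
\begin{align*}
\beta_{n}/n!
& =
\int_{[0,1]^{n}} \chi_{n+m}(x_{1},\ldots,x_{n},x_{1},\ldots,x_{m}) \: dx \\
& =
\int_{[0,1]^{m}} K_{n}(x,x) \: dx
=
\trace(T^{n})
=
\sum_{k=1}^{K} \lambda_{k}^{n} .
\end{align*}
The first equality is the Lemma. For the second I would simply match the definition of the kernel $K_{n}$: on the diagonal, relabelling the $n-m$ inner integration variables $t_{1}, \ldots, t_{n-m}$ as $x_{m+1}, \ldots, x_{n}$ turns the argument $(x_{1}, \ldots, x_{m}, t_{1}, \ldots, t_{n-m}, x_{1}, \ldots, x_{m})$ into $(x_{1}, \ldots, x_{n}, x_{1}, \ldots, x_{m})$, so that $\int_{[0,1]^{m}} K_{n}(x,x) \: dx$ is exactly the cyclic integral over $[0,1]^{n}$. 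Thus the entire combinatorial content is captured by the diagonal of the kernel of $T^{n}$.

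For the final equality I would invoke the spectral theory of compact operators. The nonzero spectrum of the compact operator $T$ consists of eigenvalues, and by the spectral mapping theorem, tracking algebraic multiplicities, the eigenvalues of $T^{n}$ are precisely the numbers $\lambda_{k}^{n}$. Granting that $T^{n}$ is of trace class, Lidskii's theorem then yields $\trace(T^{n}) = \sum_{k} \lambda_{k}(T^{n}) = \sum_{k} \lambda_{k}^{n}$, the rearrangement being legitimate since the series converges absolutely by hypothesis.

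The main obstacle is the middle step $\int_{[0,1]^{m}} K_{n}(x,x) \: dx = \trace(T^{n})$ together with the trace-class property, neither of which is automatic: a Hilbert--Schmidt operator need not be trace class, and for a merely $L^{2}$ kernel the integral along the null set that is the diagonal carries no intrinsic meaning. The way around this is to exploit the regularity of $K_{n}$. For $n \geq m+1$ at least one variable $t_{i}$ is genuinely integrated out, and since each factor of $\chi_{n+m}$ is piecewise constant on regions cut out by coordinate comparisons, integrating over $t_{1}, \ldots, t_{n-m}$ produces a continuous, indeed piecewise-polynomial, kernel $K_{n}$; this is exactly why the hypothesis reads $n \geq m+1$ rather than $n \geq m$, the kernel $K_{m} = \chi_{2m}$ being only a step function. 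With $K_{n}$ regular I would secure the trace-class property and the diagonal trace formula by factoring $T^{n} = T^{a} T^{b}$ into two Hilbert--Schmidt operators, which is possible once $a,b \geq m$, and using $\trace(T^{a} T^{b}) = \int_{[0,1]^{m}} \int_{[0,1]^{m}} K_{a}(x,y) \, K_{b}(y,x) \: dx \, dy$ together with the semigroup identity $K_{a+b}(x,z) = \int_{[0,1]^{m}} K_{a}(x,y) \, K_{b}(y,z) \: dy$, which collapses the double integral to $\int_{[0,1]^{m}} K_{n}(x,x) \: dx$; for the residual range $m+1 \leq n < 2m$ I would instead appeal to the standard trace theorem for sufficiently regular kernels on a compact domain. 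Verifying the requisite regularity of $K_{n}$ and quoting the correct functional-analytic trace theorem is where the real care is needed, while everything else is bookkeeping.
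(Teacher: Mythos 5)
Your skeleton is exactly the paper's: invoke the Lemma to write $\beta_{n}/n!$ as the integral of $\chi_{n+m}$ with wrapped arguments, recognize that integral as $\int_{[0,1]^{m}}K_{n}(x,x)\,dx$, identify this with $\trace(T^{n})$, and equate the trace with $\sum_{k}\lambda_{k}^{n}$. The difference is that the paper disposes of the two analytic steps in one stroke by citing Exercise~49 of Section~XI.8 in Dunford--Schwartz: the hypothesis that $\sum_{k}|\lambda_{k}|^{n}$ converges is used there to conclude that the Hilbert--Schmidt operator $T^{n}$ is trace class, and part~(c) of the same exercise supplies the diagonal formula for the trace. Your replacement argument via the factorization $T^{n}=T^{a}T^{b}$ with $a,b\geq m$ (product of two Hilbert--Schmidt operators, plus the semigroup identity for the kernels) is a clean and essentially self-contained way to get both facts, but only for $n\geq 2m$.

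For the residual range $m+1\leq n<2m$ there is a genuine gap. First, the regularity claim is false: integrating out $t_{1},\ldots,t_{n-m}$ does not make $K_{n}$ continuous. Each factor of $\chi_{n+m}$ still carries up to $m$ of the un-integrated coordinates, and the $t$-integral of an indicator of an ordering pattern is only piecewise polynomial, with jumps across hyperplanes where two of the remaining coordinates coincide; for example, with $m=2$ and $\wt=\delta_{\sigma,123}$ one has $\int_{0}^{1}\chi(x_{1},x_{2},t)\,dt=(1-x_{2})$ for $x_{1}<x_{2}$ and $0$ for $x_{1}>x_{2}$. Moreover, for $n\leq 2m-1$ some windows of $\chi_{n+m}$ contain both front coordinates $x_{i}$ and back coordinates $y_{j}$, so after integration $K_{n}(x,y)$ can jump across hyperplanes of the form $x_{i}=y_{j}$, and the diagonal must be handled with care. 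Second, there is no ``standard trace theorem for sufficiently regular kernels'' without further hypotheses: a continuous kernel on a compact domain need not give a trace class operator, and Mercer-type results require positive semidefiniteness, which $T^{n}$ does not have. Note also that in this range your argument never uses the standing hypothesis $\sum_{k}|\lambda_{k}|^{n}<\infty$, which is precisely what the paper's citation consumes to secure the trace-class property. To close the gap you should either follow the paper and quote the Dunford--Schwartz exercise, or first establish trace class from the eigenvalue hypothesis and then invoke a diagonal-trace formula valid for trace class integral operators with merely measurable kernels (e.g., Brislawn's averaged-diagonal theorem).
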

\begin{proof}
Begin to observe that $T^{n}$ is an Hilbert-Schmidt operator.
Since the sum $\sum_{k} |\lambda_{k}|^{n}$
converges then the operator $T^{n}$ is a trace class operator;
see
Section~XI.8, Exercise~49 in~\cite{Dunford_Schwartz}.
The trace of the operator is given by
the convergent sum
$$  \trace(T^{n}) = \sum_{k=1}^{K} \lambda_{k}^{n}  .  $$
But the trace of $T^{n}$ is also given by
the following integral;
see part (c) of the above mentioned exercise in~\cite{Dunford_Schwartz}:
\begin{align*}
\trace(T^{n})
  & = 
\int_{[0,1]^{m}} K(x,x) \: dx \\
  & = 
\int_{[0,1]^{n}}
\chi_{n+m}(x_{1},\ldots, x_{m}, t_{1}, \ldots, t_{n-m},x_{1}, \ldots, x_{m})
\: dx_{1} \cdots dx_{m} dt_{1} dt_{n-m} \\
  & = 
\beta_{n}/n!,
\end{align*}
proving the result.
\end{proof}

We give two examples.
\begin{theorem}
The number of cyclically consecutive $123$-avoiding permutations
in the symmetric group ${\mathfrak S}_{n}$ for $n \geq 2$ is given by
$$   \beta_{n}(123)
     =
       n! \cdot
  \sum_{k=-\infty}^{\infty} \left(\frac{\sqrt{3}}{2\pi (k+1/3)}\right)^{n} . $$
\label{theorem_123}
\end{theorem}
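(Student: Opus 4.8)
The plan is to compute the entire spectrum of the operator $T$ attached to the pattern $123$ and then feed it into Theorem~\ref{theorem_main}. Here the pattern sits in ${\mathfrak S}_{3}$, so $m=2$ and $T$ acts on $L^{2}([0,1]^{2})$. Since $\Pi(x_{1},x_{2},x_{3})=123$ exactly when $x_{1}<x_{2}<x_{3}$, the kernel function satisfies $\chi(t,x_{1},x_{2})=1$ unless $t<x_{1}<x_{2}$, where it is $0$. Writing out the eigenvalue equation $T(f)=\lambda\cdot f$ using \eqref{equation_T} gives
\begin{equation*}
\lambda\cdot f(x_{1},x_{2})=\int_{0}^{1} f(t,x_{1})\:dt-\mathbf{1}(x_{1}<x_{2})\cdot\int_{0}^{x_{1}} f(t,x_{1})\:dt .
\end{equation*}
The crucial structural observation is that the right-hand side depends on $x_{2}$ only through whether $x_{2}<x_{1}$ or $x_{2}>x_{1}$. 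Hence any eigenfunction (for $\lambda\neq 0$) is piecewise constant in its second variable, of the form $f(x_{1},x_{2})=a(x_{1})$ when $x_{2}<x_{1}$ and $f(x_{1},x_{2})=b(x_{1})$ when $x_{2}>x_{1}$.

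First I would substitute this ansatz back and split the integrals at the diagonal to obtain the coupled integral equations $\lambda\cdot a(x)=\int_{0}^{x} b+\int_{x}^{1} a$ and $\lambda\cdot b(x)=\int_{x}^{1} a$. Differentiating removes the integrals: one finds $a=-\lambda\cdot b'$ and then the single second-order equation $\lambda^{2}\cdot b''+\lambda\cdot b'+b=0$, whose characteristic roots are $r_{\pm}=e^{\pm 2\pi i/3}/\lambda$. Evaluating the two integral equations at the endpoints supplies the boundary data $b(1)=0$ together with $b(0)+\lambda\cdot b'(0)=0$. Imposing these on the general solution $b=A\cdot e^{r_{+}x}+B\cdot e^{r_{-}x}$ produces a homogeneous $2\times 2$ system; since $1+\lambda\cdot r_{\pm}=e^{\pm i\pi/3}$, its determinant vanishes precisely when $r_{+}-r_{-}\in 2\pi i\cdot({\mathbb Z}+1/3)$, that is, when $\sqrt{3}/\lambda=2\pi\cdot(k+1/3)$. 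This pins the eigenvalues down as $\lambda_{k}=\sqrt{3}/(2\pi(k+1/3))$ for $k\in{\mathbb Z}$, each arising as a simple root of the determinant, which also shows the list is exhaustive (no other, possibly complex, eigenvalues occur); the same computation is carried out in Section~5 of~\cite{Ehrenborg_Kitaev_Perry}.

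With the spectrum in hand the conclusion for large $n$ is quick. Because $|\lambda_{k}|$ is asymptotic to $\sqrt{3}/(2\pi\lvert k\rvert)$, the series $\sum_{k}|\lambda_{k}|^{n}$ converges for every $n\geq 2$. For $n\geq 3=m+1$ this is exactly the hypothesis of Theorem~\ref{theorem_main}, which immediately yields $\beta_{n}(123)=n!\cdot\sum_{k}\lambda_{k}^{n}$, the stated formula.

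The remaining point, which I expect to be the genuine subtlety rather than a routine check, is the boundary value $n=2$ together with the completeness claim above: Theorem~\ref{theorem_main} is only available for $n\geq m+1=3$, since when $n\leq m$ the wrapped word $(x_{1},\dots,x_{n},x_{1},\dots,x_{m})$ forces a window of length $m+1$ to repeat a coordinate and the trace identity degenerates. For $n=2$ I would argue directly: a cyclic word of length $2$ admits no window of three distinct consecutive entries, so every permutation in ${\mathfrak S}_{2}$ avoids $123$ cyclically and $\beta_{2}=2$. To confirm the series reproduces this, I would evaluate it at $n=2$ via the reflection identity $\sum_{k\in{\mathbb Z}}(k+1/3)^{-2}=\pi^{2}/\sin^{2}(\pi/3)=4\pi^{2}/3$, which gives $2!\cdot\frac{3}{4\pi^{2}}\cdot\frac{4\pi^{2}}{3}=2$, matching the direct count and completing the range $n\geq 2$.
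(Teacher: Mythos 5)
Your proposal is correct and follows essentially the same route as the paper: identify the spectrum $\lambda_{k}=\sqrt{3}/(2\pi(k+1/3))$ (which the paper simply cites from \cite[Proposition~5.1]{Ehrenborg_Kitaev_Perry} rather than rederiving), verify absolute convergence of $\sum_{k}|\lambda_{k}|^{n}$ for $n\geq 2$, apply Theorem~\ref{theorem_main} for $n\geq 3$, and treat $n=2$ by a separate direct check. The only difference is that you carry out in detail the eigenvalue computation and the $n=2$ verification that the paper leaves to the reference and to the reader.
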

\begin{proof}
Here we let the weight function $\wt$ on ${\mathfrak S}_{3}$
be given by
$\wt(\sigma) = 1 - \delta_{\sigma,123}$,
where $\delta$ denotes the Kronecker delta.
The eigenvalues are given by
$\lambda_{k} = \frac{\sqrt{3}}{2\pi (k+1/3)}$
where $k$ ranges over all the integers;
see~\cite[Proposition~5.1]{Ehrenborg_Kitaev_Perry}.
Note for $n \geq 2$ the series
$\sum_{k=-\infty}^{\infty} |\lambda_{k}|^{n}$ converges.
By Theorem~\ref{theorem_main}
the result follows in the case $n \geq 3$. The case $n=2$
is can be verified by a separate calculation.
\end{proof}

A different approach to prove this theorem is to cyclically shift the 
permutations such that $\pi_{n} = n$.
Now we are looking for $123$-avoiding permutations
$\pi_{1} \cdots \pi_{n-1}$ in ${\mathfrak S}_{n-1}$.
However, we also require that the last two entries yields
a descent, since otherwise the three entries
$\pi_{n-2} \pi_{n-1} \pi_{n}$ would have the $123$ pattern.
The number of such permutations in ${\mathfrak S}_{n-1}$
can be calculated by
$(n-1)! \cdot \pair{T^{n-m}(\kappa)}{\mu}$
where $\kappa$ is the constant function ${\mathbf 1}$
and
$\mu$ is the $0,1$-function $\mu(x,y) = 1$ if $x > y$
and $\mu(x,y) = 0$ if $x < y$.
However, Theorem~2.1 in~\cite{Ehrenborg_Jung}
only gives an asymptotic expansion for the desired
quantity. 
Lastly, one has to do an analytic argument
to show
that the asymptotic expansion actually gives a
convergent series.

As our next example, we consider the weighted example
from~\cite[Section~6]{Ehrenborg_Jung}.
Let $\bb(\pi)$ and $\bbc(\pi)$ denote
the number of double descents of the permutation $\pi$,
respectively,
the number of cyclically double descents of the permutation~$\pi$,
that is,
\begin{align*}
 \bb(\pi)  & = \#\{i \in \{1,2, \ldots, n-2\} \: : \: \pi_{i} < \pi_{i+1} < \pi_{i+2} \} , \\
 \bbc(\pi) & = \#\{i \in \{1,2, \ldots, n\} \: : \: \pi_{i} < \pi_{i+1} < \pi_{i+2} \} , 
 \end{align*}
where the indexes are modulo $n$.

\begin{theorem}
The sum of $2^{\bbc(\pi)}$
over all permutations $\pi$ in ${\mathfrak S}_{n}$
which cyclically do not have a double ascent is given by $n!$,
that is,
$$   \sum_{\onethingatopanother{\pi \in {\mathfrak S}_{n}}
                  {\pi \text{ cyclically $123$-avoiding}}}
         2^{\bbc(\pi)}
      =
           n! . $$
\label{theorem_weight}
\end{theorem}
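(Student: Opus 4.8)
The plan is to choose the weight so that the cyclic weight $\Wt^{c}$ records exactly the summand $2^{\bbc(\pi)}$, and then to feed the spectrum of the resulting operator into Theorem~\ref{theorem_main}. Take $m = 2$ and define $\wt$ on ${\mathfrak S}_{3}$ by $\wt(123) = 0$, $\wt(321) = 2$, and $\wt(\sigma) = 1$ for the four remaining patterns $132, 213, 231, 312$. With this choice the cyclic weight $\Wt^{c}(\pi) = \prod_{i=1}^{n} \wt(\Pi(\pi_{i}, \pi_{i+1}, \pi_{i+2}))$ vanishes as soon as some cyclic window standardizes to $123$, so the only permutations surviving with nonzero weight are those that are cyclically $123$-avoiding; and for such a permutation each cyclic window standardizing to $321$ contributes a factor $2$, giving $\Wt^{c}(\pi) = 2^{\bbc(\pi)}$. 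Summing over ${\mathfrak S}_{n}$ then identifies $\beta_{n}$ with the left-hand side of the theorem.

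By Theorem~\ref{theorem_main} we have $\beta_{n} = n! \cdot \sum_{k} \lambda_{k}^{n}$, where the $\lambda_{k}$ are the eigenvalues of the operator $T$ of equation~\eqref{equation_T} attached to this $\wt$. It therefore suffices to show that $1$ is the only nonzero eigenvalue of $T$ and that it is simple; this is the weighted example of~\cite[Section~6]{Ehrenborg_Jung}. Granting this, the spectrum contributes $\sum_{k} \lambda_{k}^{n} = 1^{n} = 1$, since every other eigenvalue is $0$ and $0^{n} = 0$ for $n \geq 1$; the absolute convergence hypothesis of Theorem~\ref{theorem_main} then holds trivially, and hence $\beta_{n} = n!$.

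To establish the spectral claim directly I would compute $T$ explicitly. Splitting the inner integral in~\eqref{equation_T} at $t = x_{1}$ shows that $T(f)(x_{1}, x_{2})$ depends on $x_{2}$ only through the sign of $x_{2} - x_{1}$. Consequently every function in the range of $T$, and hence (for $\lambda \neq 0$) every eigenfunction, is constant in $x_{2}$ on each of the regions $x_{1} < x_{2}$ and $x_{1} > x_{2}$, and may be recorded by two functions $a, b$ on $[0,1]$ via $f(x_{1}, x_{2}) = a(x_{1})$ when $x_{1} < x_{2}$ and $f(x_{1}, x_{2}) = b(x_{1})$ when $x_{1} > x_{2}$. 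The equation $T(f) = \lambda \cdot f$ then reduces to the coupled system
\begin{align*}
  \lambda \cdot a(x) & = \int_{x}^{1} b(t) \: dt , \\
  \lambda \cdot b(x) & = \int_{0}^{x} a(t) \: dt + 2 \int_{x}^{1} b(t) \: dt .
\end{align*}
Differentiating removes the integrals and yields $\lambda^{2} \cdot a''(x) + 2 \lambda \cdot a'(x) + a(x) = 0$, whose characteristic polynomial $(\lambda r + 1)^{2}$ has the double root $r = -1/\lambda$, so $a(x) = (C_{1} + C_{2} x) \cdot e^{-x/\lambda}$. Evaluating the system at the endpoints gives the boundary conditions $a(1) = 0$ and $b(0) = 2 a(0)$ with $b = -\lambda \cdot a'$; the first forces $C_{2} = -C_{1}$, and the second then reduces to $C_{1} \cdot (\lambda + 1) = 2 C_{1}$. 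A nontrivial solution therefore exists precisely when $\lambda = 1$, with one-dimensional eigenspace spanned by $a(x) = (1-x) \cdot e^{-x}$ and $b(x) = (2-x) \cdot e^{-x}$, confirming that $1$ is the unique and simple nonzero eigenvalue.

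The main obstacle is exactly this spectral determination: one must rule out nontrivial solutions of the boundary-value problem for every $\lambda \neq 1$ and verify that the range of $T$ genuinely consists of functions of the reduced form, so that the eigenvalue list handed to Theorem~\ref{theorem_main} is really $(1, 0, 0, \ldots)$; everything downstream, including the convergence hypothesis, is then automatic. As in the proof of Theorem~\ref{theorem_123}, the smallest admissible value $n = 3$, where the cyclic windows overlap, is worth confirming by a direct count: the surviving permutations are $132, 213, 321$, each of weight $2$, so $\beta_{3} = 2 + 2 + 2 = 6 = 3!$, in agreement with the formula.
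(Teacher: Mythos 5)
Your proposal is correct and its overall architecture is exactly the paper's: you choose the same weight function ($\wt(123)=0$, $\wt(321)=2$, $\wt=1$ otherwise), identify $\beta_{n}$ with the sum in question, and feed the spectrum into Theorem~\ref{theorem_main}. Where you diverge is in the treatment of the spectral input: the paper simply cites \cite[Theorem~6.1]{Ehrenborg_Jung} for the fact that the associated operator has $1$ as its only eigenvalue, whereas you derive this from scratch by observing that the range of $T$ consists of functions depending on $x_{2}$ only through the sign of $x_{2}-x_{1}$, reducing the eigenvalue equation to a coupled integral system for $(a,b)$, and solving the resulting boundary-value problem $\lambda^{2}a''+2\lambda a'+a=0$, $a(1)=0$, $b(0)=2a(0)$. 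I checked this computation and it is right, including the conclusion that nontrivial solutions exist only for $\lambda=1$. What your route buys is a self-contained proof; what it still owes is one fine point: Theorem~\ref{theorem_main} is a trace formula, so the eigenvalue list must carry \emph{algebraic} multiplicities, and your boundary-value analysis pins down only the geometric eigenspace of $\lambda=1$ as one-dimensional. One should also rule out a generalized eigenvector at $\lambda=1$ (a short computation shows the inhomogeneous system $(T-1)g=f$ forces the incompatible conditions $D_{1}+D_{2}=1$ and $D_{1}+D_{2}=2$, so the Jordan block is trivial), or else fall back on the cited reference as you indicate. Two cosmetic remarks: your $n=3$ check is a nice sanity test but is already covered by Theorem~\ref{theorem_main} since $n\geq m+1=3$, while the degenerate cases $n\leq 2$ that the paper disposes of separately are not addressed in your write-up; and note that the paper's displayed definition of $\bbc$ has the inequalities reversed (it should count windows standardizing to $321$, as your identification $\Wt^{c}(\pi)=2^{\bbc(\pi)}$ implicitly and correctly assumes).
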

\begin{proof}
Here the weight function $\wt$ on ${\mathfrak S}_{3}$
be given by
$\wt(132) = \wt(213) = \wt(231) = \wt(312)  = 1$,
$\wt(123) = 0$
but $\wt(321) = 2$.
The associated operator has only one eigenvalue namely $1$;
see~\cite[Theorem~6.1]{Ehrenborg_Jung}.
Hence the result follows directly for $n \geq 3$.
The three cases $n \leq 2$ are direct.
\end{proof}

\begin{corollary}
The sum of $2^{\bb(\pi)}$
over all permutations in ${\mathfrak S}_{n}$
such that $\pi_{1} = n$,
there is no double ascent 
and which end with a descent,
that is, $\pi_{n-1} > \pi_{n}$,
is given by $(n-1)!$.
\end{corollary}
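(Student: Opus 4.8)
The plan is to derive the corollary from Theorem~\ref{theorem_weight} by the cyclic-rotation argument sketched after Theorem~\ref{theorem_123}, now applied to the weighted setting. First I would note that the cyclic weight $\Wt^{c}$ is invariant under cyclically rotating the one-line notation of a permutation: the product defining $\Wt^{c}(\pi)$ runs over all $n$ cyclic windows $\Pi(\pi_{i}, \ldots, \pi_{i+m})$, and a rotation merely permutes these windows among themselves. Since the entry $n$ occurs exactly once, no rearrangement of distinct values is fixed by a nontrivial rotation, so the $n!$ permutations of ${\mathfrak S}_{n}$ fall into $(n-1)!$ rotation orbits, each of size $n$, and each orbit has a unique representative $\sigma$ with $\sigma_{1} = n$. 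As $\Wt^{c}$ is constant on each orbit, summing gives $\sum_{\sigma \in {\mathfrak S}_{n}} \Wt^{c}(\sigma) = n \cdot \sum_{\sigma_{1} = n} \Wt^{c}(\sigma)$. Theorem~\ref{theorem_weight} evaluates the left-hand side $\sum_{\sigma} \Wt^{c}(\sigma)$ as $n!$, so $\sum_{\sigma_{1} = n} \Wt^{c}(\sigma) = (n-1)!$.

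It then remains to identify $\Wt^{c}(\sigma)$ with the desired summand whenever $\sigma_{1} = n$. Here $m = 2$, so $\Wt^{c}(\sigma)$ is the product of the $n-2$ linear windows $\Pi(\sigma_{i}, \sigma_{i+1}, \sigma_{i+2})$ for $1 \le i \le n-2$ together with the two wrap-around windows $\Pi(\sigma_{n-1}, \sigma_{n}, \sigma_{1})$ and $\Pi(\sigma_{n}, \sigma_{1}, \sigma_{2})$, where $\sigma_{1} = n$. I would run a short case analysis on the position of the maximum: the window $\Pi(\sigma_{n}, n, \sigma_{2})$, having $n$ in the center, is always $132$ or $231$ and so contributes the factor $1$; the window $\Pi(\sigma_{n-1}, \sigma_{n}, n)$, having $n$ at the end, is $123$ (weight $0$) precisely when $\sigma_{n-1} < \sigma_{n}$ and is $213$ (weight $1$) precisely when $\sigma_{n-1} > \sigma_{n}$. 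Hence $\Wt^{c}(\sigma)$ vanishes unless $\sigma$ ends in a descent, and in that case it equals the product of the linear windows, which is $0$ if some linear window is a double ascent and $2^{\bb(\sigma)}$ otherwise.

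Combining the two steps yields exactly $\sum 2^{\bb(\sigma)} = (n-1)!$, the sum being over $\sigma \in {\mathfrak S}_{n}$ with $\sigma_{1} = n$, no double ascent, and $\sigma_{n-1} > \sigma_{n}$; the degenerate cases $n \le 2$ I would check by hand. I expect the wrap-around case analysis to be the only delicate point: one must confirm that fixing $n$ in the first coordinate makes exactly one boundary window force the terminal descent while the other stays inert, and that under the constraints $\sigma_{1} = n$ and $\sigma_{n-1} > \sigma_{n}$ the cyclic and linear notions of ``no double ascent'' agree, so that the restriction to linear windows in the second step neither loses nor double-counts any admissible permutation.
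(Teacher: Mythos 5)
Your proof is correct and takes essentially the same route as the paper: the paper's own proof consists precisely of the observation that $\Wt^{c}$ is invariant under cyclic shifts, so one normalizes to $\pi_{1}=n$ and divides the total $n!$ from Theorem~\ref{theorem_weight} by the orbit size $n$. You merely supply the details the paper leaves implicit, namely that every rotation orbit has size exactly $n$ with a unique representative starting with $n$, and the case analysis of the two wrap-around windows showing that for such a representative $\sigma$ the cyclic weight equals $2^{\bb(\sigma)}$ times the indicator that $\sigma$ ends in a descent and has no (linear) double ascent.
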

\begin{proof}
The weight of a permutation in the previous theorem
does not change when it is cyclically shifted.
Hence shift the permutation
such that $\pi_{1} = n$ and the sum is given by~$n!/n$.
\end{proof}

As an example, there are $9$ permutations in ${\mathfrak S}_{5}$
that are $123$-avoiding, begin with the element~$5$
and end with a descent.
In lexicographic order they are 
$51432$,
$52143$,
$52431$,
$53142$,
$53241$,
$53421$,
$54132$,
$54231$,
and~$54321$.
Note that the $8$ first permutations all have one double descent
and the last permutation has three double descents yielding 
the sum $8 \cdot 2^{1} + 1 \cdot 2^{3} = 4!$.

It is tempting to use~\cite[Proposition~7.2]{Ehrenborg_Kitaev_Perry}
to obtain a result for 
the number of cyclically consecutive
$123,231,312$-avoiding permutations.
However, note that this proposition only determines a part
of the spectrum of the associated operator corresponding
to an invariant subspace. 
Hence there is not enough information to apply
Theorem~\ref{theorem_main}.

\section{Concluding remarks}

Are there other operators of the form~\eqref{equation_T}
where we can determine the spectrum?
This is not a straightforward question since the spectrum
for $213$-avoiding permutations is given by the equation
$$   \erf\left(\frac{1}{\sqrt{2} \cdot \lambda}\right) = \sqrt{\frac{2}{\pi}}  ,  $$
where $\erf$ denotes the error function.
Is is always true that the sum
$\sum_{k} |\lambda_{k}|^{m}$ is absolutely convergent?
In other words, is the operator $T^{m}$ always trace class?
Does
the generating function
$\sum_{n \geq 0} \beta_{n} \cdot z^{n}/n!$
have a nicer form than
the generating function
$\sum_{n \geq 0} \alpha_{n} \cdot z^{n}/n!$;
see~\cite{Ehrenborg,Elizalde_Noy}.

It is interesting to note that Theorem~\ref{theorem_123}
gives a exact expression for the number of cyclically
consecutive $123$-avoiding permutations.
However, for
consecutive $123$-avoiding permutations
the corresponding result~\cite[Theorem~5.4]{Ehrenborg_Kitaev_Perry}
only yields an asymptotic expansion.

Since this result
of Theorem~\ref{theorem_weight}
is purely combinatorial, it is natural to ask
for a bijective proof.

\section*{Acknowledgments}

The author is grateful to Margaret Readdy %and the referee
for her comments on an earlier version of this paper.
The author was partially supported by 
National Security Agency grant~H98230-13-1-0280.

\newcommand{\journal}[6]{{\sc #1,} #2, {\it #3} {\bf #4} (#5), #6.}
\newcommand{\book}[4]{{\sc #1,} ``#2,'' #3, #4.}
\newcommand{\thesis}[4]{{\sc #1,} ``#2,'' Doctoral dissertation, #3, #4.}
\newcommand{\springer}[4]{{\sc #1,} ``#2,'' Lecture Notes in Math.,
                     Vol.\ #3, Springer-Verlag, Berlin, #4.}
\newcommand{\preprint}[3]{{\sc #1,} #2, preprint #3.}
\newcommand{\preparation}[2]{{\sc #1,} #2, in preparation.}
\newcommand{\appear}[3]{{\sc #1,} #2, to appear in {\it #3}}
\newcommand{\submitted}[4]{{\sc #1,} #2, submitted to {\it #3}, #4.}
\newcommand{\JCTA}{J.\ Combin.\ Theory Ser.\ A}
\newcommand{\AdvancesinMathematics}{Adv.\ Math.}
\newcommand{\AdvancesinAppliedMathematics}{Adv.\ in Appl.\ Math.}
\newcommand{\JournalofAlgebraicCombinatorics}{J.\ Algebraic Combin.}

\bigskip

{\em R.\ Ehrenborg,
Department of Mathematics,
University of Kentucky,
Lexington, KY 40506-0027,}
{\tt jrge@ms.uky.edu}

\end{document}